\theoremstyle{plain}
\newtheorem{theorem}{Theorem}
\newtheorem{lemma}[theorem]{Lemma}
\newtheorem{proposition}[theorem]{Proposition}
\theoremstyle{definition}
\newtheorem{definition}[theorem]{Definition}
\newtheorem{example}[theorem]{Example}
\newtheorem{question}[theorem]{Question}
\theoremstyle{remark}
\newtheorem{remark}[theorem]{Remark}
\begin{document}

\title{Face rings of cycles, associahedra, \\
and standard Young tableaux}

\author{Anton Dochtermann}
\date\today

\maketitle
\begin{abstract}
We show that $J_n$, the Stanley-Reisner ideal of the $n$-cycle, has a free resolution supported on the $(n-3)$-dimensional simplicial associahedron $A_n$.  This resolution is not minimal for $n \geq 6$; in this case the Betti numbers of $J_n$ are strictly smaller than the $f$-vector of $A_n$.  We show that in fact the Betti numbers $\beta_{d}$ of $J_n$ are in bijection with the number of standard Young tableaux of shape $(d+1, 2, 1^{n-d-3})$.    This complements the fact that the number of $(d-1)$-dimensional faces of $A_n$ are given by the number of standard Young tableaux of (super)shape $(d+1, d+1, 1^{n-d-3})$; a bijective proof of this result was first provided by Stanley.   An application of discrete Morse theory yields a cellular resolution of $J_n$ that we show is minimal at the first syzygy.   We furthermore exhibit a simple involution on the set of associahedron tableaux with fixed points given by the Betti tableaux, suggesting a Morse matching and in particular a poset structure on these objects.

\end{abstract}

\section{Introduction}

In this paper we study some intriguing connections between basic objects from commutative algebra and combinatorics.  For ${\mathbb K}$ an arbitrary field we let $R = {\mathbb K}[x_1, x_2, \dots, x_n]$ denote the polynomial ring in $n$ variables.  We let $J_n$ denote the edge ideal of the complement of the $n$-cycle $C_n$.   By definition, $J_n$ is the ideal generated by the degree 2 monomials corresponding to the diagonals of $C_n$.  One can also realize $J_n$ as the Stanley-Reisner ideal of the cycle $C_n$ (now thought of as a one-dimensional simplicial complex).

\begin{figure}[!ht]
\begin{center}
  \includegraphics[scale = 0.55]{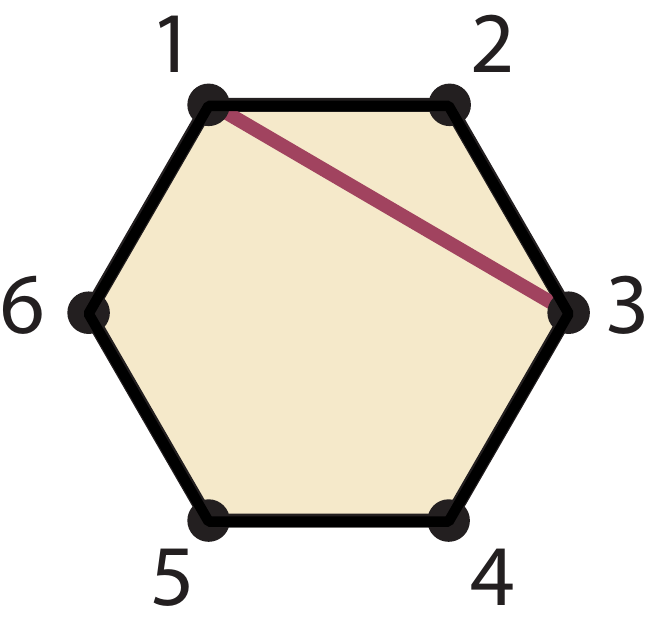}
\end{center}
    \caption{$J_6 = \langle  x_1x_3, x_1x_4, x_1x_5, x_2x_4, x_2x_5, x_2x_6, x_3x_5, x_3x_6, x_4x_6 \rangle$.}
\label{Fig:cycle}
\end{figure}

The ideals $J_n$ are of course very simple algebraic objects and their homological properties are well-understood.  One can verify that $R/J_n$ is a Gorenstein ring, the dimension of $R/J_n$ is 2, and (hence) the projective dimension of $R/J_n$ is $n-2$.  In fact a minimal free resolution can be described explicitly, and cellular realizations have been provided by Biermann \cite{Bie} and more recently by Sturgeon \cite{Stu}.

We wish to further investigate the combinatorics involved in the resolutions of $J_n$.  Our original interest in cellular resolutions of $J_n$ came from the fact that the ideal $J_n$ has an `almost linear' resolution, in the sense that the nonzero entries in the differentials of its minimal resolution are linear forms from $R$, \emph{except} at the last syzygy where the nonzero entries are all degree 2.  Recent work in combinatorial commutative algebra has seen considerable interest in cellular resolutions of monomial and binomial ideals (see for example \cite{BayStu}, \cite{BraBroKle}, \cite{DocEng}, \cite{DocMoh}, \cite{Goo}, \cite{Mer}, \cite{NagRei}, \cite{Sin}), but in almost all cases the ideals under consideration have linear resolutions.  Here we seek to extend some of these constructions.

In the construction of any cellular resolution, one must construct a $CW$-complex with faces labeled by monomials that generate the ideal.  In the case of $J_n$ there is a well known geometric object whose vertices are labeled by the diagonals of an $n$-gon, namely the (simplicial) associahedron $A_n$.  By definition, $A_n$ is the simplicial complex with vertex set given by diagonals of an $n$-gon, with faces given by collections of diagonals that are non-crossing.  The facets of $A_n$ are triangulations of the $n$-gon, of which there are a Catalan number many.  It is well known that $A_n$ is spherical, and in fact can be realized as the boundary of a convex polytope.  In addition there is a natural way to associate a monomial to each face of $A_n$, and in the first part of the paper we show that this labeled facial structure of $A_n$ (considered as a polytope with a single interior cell) encodes the syzygies of $J_n$. 
\newtheorem*{thm:resolution}{Theorem \ref{thm:resolution}}
\begin{thm:resolution}
With its natural monomial labeling, the complex $A_n$ supports a free resolution of the ideal $J_n$.
\end{thm:resolution}
The resolution of $J_n$ supported on the associahedron $A_n$ is not minimal for $n > 5$, and in particular in this case we have faces $F \subsetneq G$ with the same monomial labeling.  The $f$-vector of $A_n$ is completely understood (a closed form can be written down), and in fact the number of $(d-1)$-dimensional faces of $A_n$ is equal to the number of standard Young tableaux of shape $(d+1, d+1, 1^{n-d-3})$; a bijective proof of this was first provided by Stanley \cite{Sta}.  

Since a resolution of $J_n$ is supported on $A_n$ we know that the $f$-vector of $A_n$ provides an upper bound on the Betti numbers $\beta_d(R/J_n)$, with equality in the case of $\beta_1(R/J_n) = f_0(A_n)$.  In the second part of the paper we show that the Betti numbers $\beta_d$ of $R/J_n$ are given by standard Young tableaux on a set of subpartitions involved in the Stanley bijection.  
\newtheorem*{thm:Betti}{Theorem \ref{thm:Betti}}
\begin{thm:Betti}
The total Betti numbers $\beta_d$ of the module $R/J_n$ are given by the number of standard Young tableaux of shape $(d+1, 2, 1^{n-d-3})$.
\end{thm:Betti}
\noindent
This bijection along with an application of the hook formula leads to a closed form expression for the Betti numbers of $R/J_n$.  In addition, the fact that the partition $(d+1, 2, 1^{n-d-3})$ is conjugate to $(n-d-1, 2,1^{d+1})$ provides a nice combinatorial interpretation of the palindromic property $\beta_d = \beta_{n-d-2}$ for the Betti numbers of the Gorenstein ring $R/J_n$.

The fact that we can (in theory) identify the Betti numbers of $R/J_n$ with certain faces of $A_n$ suggests that it may be possible to collapse away faces of $A_n$ to obtain a minimal resolution of $J_n$, employing an algebraic version of Morse theory due to Batzies and Welker (\cite{BatWel}).  Indeed certain geometric properties of any subdivision of an $n$-gon (along with the almost linearity of $J_n$) imply that certain faces must be matched away.   For $d=2$ we are able to write down a Morse matching involving the edges and 2-faces of $A_n$ such that the number of unmatched (critical) cells is precisely $\beta_2$ (corresponding to the first syzygy module of $R/J_n$), see Proposition \ref{prop:edges}.    This leads to minimal resolutions of $J_n$ for the cases $n \leq 7$.

In addition, our identification of both the Betti numbers of $R/J_n$ and the faces of $A_n$ with standard Young tableaux leads us to consider a partial matching on the set of associahedron tableaux such that the unmatched elements correspond to the Betti numbers.  The hope would be to import a poset structure from the face poset of $A_n$ to extend this matching to a Morse matching.   The trouble with this last step is that the Stanley bijection does not give us an explicit labeling of the faces of $A_n$ by standard Young tableaux; there are choices involved and the bijection itself is recursively defined.  However, we can define a very simple partial matching on the set of standard Young tableaux of shape $(d+1, d+1, 1^{n-d-3})$ such that the unmatched elements can naturally be thought of as standard Young tableaux of shape $(d+1, 2, 1^{n-d-3})$ (by deleting the largest entries); see Proposition \ref{prop:matching}.  This suggests a poset structure on the set of standard Young tableax that extends this covering relation.    

The rest of the paper is organized as follows.  We begin in Section \ref{sec:algebra} with some basics regarding the commutative algebra involved in our study.  In Section \ref{sec:Ass} we discuss associahedra and their role in resolutions of $J_n$.  We turn to standard Young tableaux in Section \ref{sec:tableaux} and here establish our results regarding the Betti numbers of $R/J_n$.   In Section \ref{sec:matching} we discuss our applications of discrete Morse theory and related matchings of stand Young tableaux.  We end with some open questions.

\section{Some commutative algebra}\label{sec:algebra}

As above we let $J_n$ denote the Stanley-Reisner ideal of the $n$-cycle, by definition the ideal in $R = {\mathbb K}[x_1, x_2, \dots, x_n]$ generated by degree 2 monomials corresponding to the diagonals.   We are interested in combinatorial interpretations of certain homological invariants of $J_n$, and in particular the combinatorial structure of its minimal free resolution.  Recall that a {\em free resolution} of an $R$-module $M$ is an exact sequence of $R$-modules
\[0 \leftarrow M \leftarrow F_1 \leftarrow F_2 \leftarrow \dots \leftarrow F_p \leftarrow 0,\]
\noindent
where each $F_d \simeq \oplus_j R(-j)^{\beta_{d,j}}$ is free and the differential maps are graded.  The resolution is {\em minimal} if each of the $\beta_{d,j}$ are minimum among all resolutions, in which case the $\beta_{d,j}$ are called the (graded) Betti numbers of $M$.  Also in this case the number $p$ (length of the minimal resolution) is called the \emph{projective dimension} of $M$.

Our main tool in calculating Betti numbers will be Hochster's formula (see for example \cite{MilStu}), which gives a formula for the Betti numbers of the Stanley-Reisner ring $R/I_\Delta$ associated to a simplicial complex $\Delta$.

\begin{theorem}[Hochster's formula] \label{Hochster}
For a simplicial complex $\Delta$ on vertex set $[n]$ we let $R/I_\Delta$ denote its Stanley-Reisner ring.  Then for $d \geq 1$ the Betti numbers $\beta_{d,j}$ of $R/I_\Delta$ are given by
\begin{equation} \label{eq:Hochster}
\beta_{d,j}(\Delta)=\sum_{W\in {[n] \choose j}} \dim_k \tilde{H}_{j-d-1} (\Delta[W];k).
\end{equation}
\noindent
Here $\Delta[W]$ denotes the simplicial complex induced on the vertex set $W$.
\end{theorem}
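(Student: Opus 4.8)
The plan is to identify the $\mathbb{Z}$-graded Betti numbers with simplicial (co)homology by passing through the finer $\mathbb{Z}^n$-graded Betti numbers and the Koszul complex. Since $R/I_\Delta$ is a $\mathbb{Z}^n$-graded module, its minimal free resolution is $\mathbb{Z}^n$-graded, so one may write $\beta_{d,j}(\Delta) = \sum_{\mathbf{a} \in \mathbb{Z}^n,\, |\mathbf{a}| = j} \beta_{d,\mathbf{a}}$, where $\beta_{d,\mathbf{a}} = \dim_k \mathrm{Tor}_d^R(R/I_\Delta, k)_{\mathbf{a}}$ and $|\mathbf{a}| = a_1 + \cdots + a_n$. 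To compute these $\mathrm{Tor}$ modules I would resolve the residue field $k = R/(x_1, \dots, x_n)$ by the Koszul complex $K_\bullet = K_\bullet(x_1, \dots, x_n)$, whose $d$-th term $K_d$ is free with basis $\{e_\tau : \tau \subseteq [n],\ |\tau| = d\}$, with $e_\tau$ placed in multidegree $\tau$ (identifying a subset with its $0/1$ indicator vector). Relying on the standard fact that $K_\bullet$ resolves $k$, one has $\mathrm{Tor}_d^R(R/I_\Delta, k) = H_d(K_\bullet \otimes_R R/I_\Delta)$, and since this complex is $\mathbb{Z}^n$-graded I can compute its homology one multidegree at a time.

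Next I would analyze the strand in a fixed multidegree $\mathbf{a}$. In homological degree $d$ one has $(K_d \otimes R/I_\Delta)_{\mathbf{a}} = \bigoplus_{|\tau| = d} (R/I_\Delta)_{\mathbf{a} - \tau}$, and because $I_\Delta$ is a squarefree monomial ideal the graded piece $(R/I_\Delta)_{\mathbf{b}}$ is one-dimensional exactly when $\mathbf{b} \geq 0$ and $\mathrm{supp}(\mathbf{b}) \in \Delta$, and is zero otherwise. Specializing to a squarefree degree $\mathbf{a} = \mathbf{1}_W$ for $W \subseteq [n]$, the surviving basis elements $e_\tau$ are exactly those with $\tau \subseteq W$ and $W \setminus \tau \in \Delta[W]$. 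Reindexing by the face $\rho = W \setminus \tau$, of dimension $|W| - d - 1$, I would check that the Koszul differential (which deletes an element of $\tau$, i.e. enlarges $\rho$) agrees up to the usual sign with the simplicial coboundary of $\Delta[W]$. Hence $(K_\bullet \otimes R/I_\Delta)_{W}$ is, after reversing the homological grading, the augmented simplicial cochain complex of $\Delta[W]$, giving $\beta_{d,W} = \dim_k \tilde{H}^{|W| - d - 1}(\Delta[W]; k)$; over a field this equals $\dim_k \tilde{H}_{|W| - d - 1}(\Delta[W]; k)$ by the universal coefficient theorem.

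It remains to show that $\beta_{d,\mathbf{a}} = 0$ whenever $\mathbf{a}$ is not squarefree, which I expect to be the crux. Suppose $a_v \geq 2$ for some vertex $v$. The point is that, since every minimal generator of $I_\Delta$ is squarefree, the support of $\mathbf{a} - \tau$ — and hence whether $x^{\mathbf{a} - \tau}$ lies in $I_\Delta$ — is unchanged by toggling membership of $v$ in $\tau$ (the hypothesis $a_v \geq 2$ keeps $v$ in the support either way). This lets me factor $(K_\bullet \otimes R/I_\Delta)_{\mathbf{a}}$ along the Koszul direction corresponding to $x_v$ and exhibit a contracting homotopy: the factor coming from $x_v$ is exact because multiplication by $x_v$ is a bijection between the relevant graded strands. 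Thus the strand is acyclic and contributes nothing to $\mathrm{Tor}$. Summing $\beta_{d,\mathbf{a}}$ over all $\mathbf{a}$ with $|\mathbf{a}| = j$, only the squarefree degrees $\mathbf{a} = \mathbf{1}_W$ with $|W| = j$ survive, and combining with the previous paragraph yields exactly $\beta_{d,j}(\Delta) = \sum_{W \in \binom{[n]}{j}} \dim_k \tilde{H}_{j - d - 1}(\Delta[W]; k)$.

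The main obstacles are bookkeeping rather than conceptual: matching the Koszul differential to the simplicial coboundary with the correct signs (so the identification with a genuine cochain complex is an isomorphism of complexes, not merely of graded vector spaces), and making the contracting-homotopy argument in the non-squarefree case fully precise. Both become routine once the squarefree structure of $(R/I_\Delta)_{\mathbf{b}}$ is exploited, and I would keep the exposition focused on the two multidegree analyses above, treating the Koszul computation of $\mathrm{Tor}$ as a black box.
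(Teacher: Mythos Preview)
The paper does not prove Hochster's formula; it is stated as Theorem~\ref{Hochster} with a reference to \cite{MilStu} and then used as a black box throughout. So there is no ``paper's own proof'' to compare against.

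That said, your sketch is correct and is essentially the standard argument (the one found in \cite{MilStu}, for instance): pass to the $\mathbb{Z}^n$-grading, compute $\mathrm{Tor}$ via the Koszul resolution of $k$, identify the squarefree strand in degree $W$ with the augmented cochain complex of $\Delta[W]$, and kill the non-squarefree strands using a contracting homotopy coming from a variable with exponent at least $2$. The two points you flag as ``bookkeeping'' are exactly where the work lies, and your descriptions of how to handle them are accurate. If you wanted to tighten the non-squarefree step, you could phrase it as follows: with $a_v\geq 2$, the map $e_\tau \mapsto e_{\tau\cup\{v\}}$ for $v\notin\tau$ (and $0$ otherwise) is a chain homotopy between the identity and zero on the $\mathbf{a}$-strand, since toggling $v$ does not change whether $x^{\mathbf{a}-\tau}\in I_\Delta$. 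Nothing is missing conceptually.
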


A \emph{cellular resolution} of $M$ is a CW-complex ${\mathcal X}$ with a monomial labeling of its faces, such that the algebraic chain complex computing the cellular homology of ${\mathcal X}$ `supports' a resolution of $M$. We refer to Section \ref{sec:Ass} for details and more precise definitions.  

We next collect some easy observations regarding the Betti numbers of $J_n$.  Since $J_n$ is the Stanley-Reisner ideal of a triangulated 1-dimensional sphere, we see that $R/J_n$ is Gorenstein and has (Krull) dimension 2.  The Auslander-Buchsbaum formula then implies that the projective dimension of $R/J_n$ is $n-2$, which says that $\beta_{d,j} = 0$ whenever $d > n-2$.  An easy application of Hochster's formula also implies that a minimal resolution of $J_n$ is linear until the last nonzero term, by which we mean if $0 \leq d < n-2$,  then $\beta_{d,j} = 0$ for all $j \neq d+1$.  Also, we have $\beta_{n-2,n} = 1$ and $\beta_{n-2,j} = 0$ for $j \neq n$.  In this sense the ideals $J_n$ have an `almost linear' resolution, as mentioned in the introduction.

{\bf Convention}: Since for any $d$ we have $\beta_{d,j} \neq 0$ for at most one value of $j$, we we will (without loss of generality) sometimes drop the $j$ and use $\beta_d = \beta_{d,j}$ to denote the Betti numbers of $J_n$.

\section{Asssociahedra}\label{sec:Ass}

For each $n$, we let $A_n$ denote the (dual) \emph{associahedron}, the $(n-4)$-dimensional simplicial complex whose vertices are given by diagonals of a labeled regular $n$-gon, with facets given by triangulations (collections of diagonals that do not intersect in their interior).  It is well known that $A_n$ is homeomorphic to a sphere and in fact is polytopal, and several embeddings (most often of the dual simple polytope) are described throughout the literature (see \cite{CebZie} for a good account of the history).  From here on we will use $A_n$ to denote the $(n-3)$-dimensional simplicial polytope (i.e., including the interior).

We wish to describe a monomial labeling of the faces of $A_n$.  Recall that each vertex of $A_n$ corresponds to some diagonal $\{i,j\}$ of an $n$-gon, so we simply label that vertex with the monomial $x_ix_j$.  We label the higher-dimensional faces of $A_n$ with the least common multiple of the vertices contained in that face.  We wish to show that, with this simple labeling, the associahedron $A_n$ supports a resolution of $J_n$.

\begin{figure}[!ht]
\begin{center}
  \includegraphics[scale = .36]{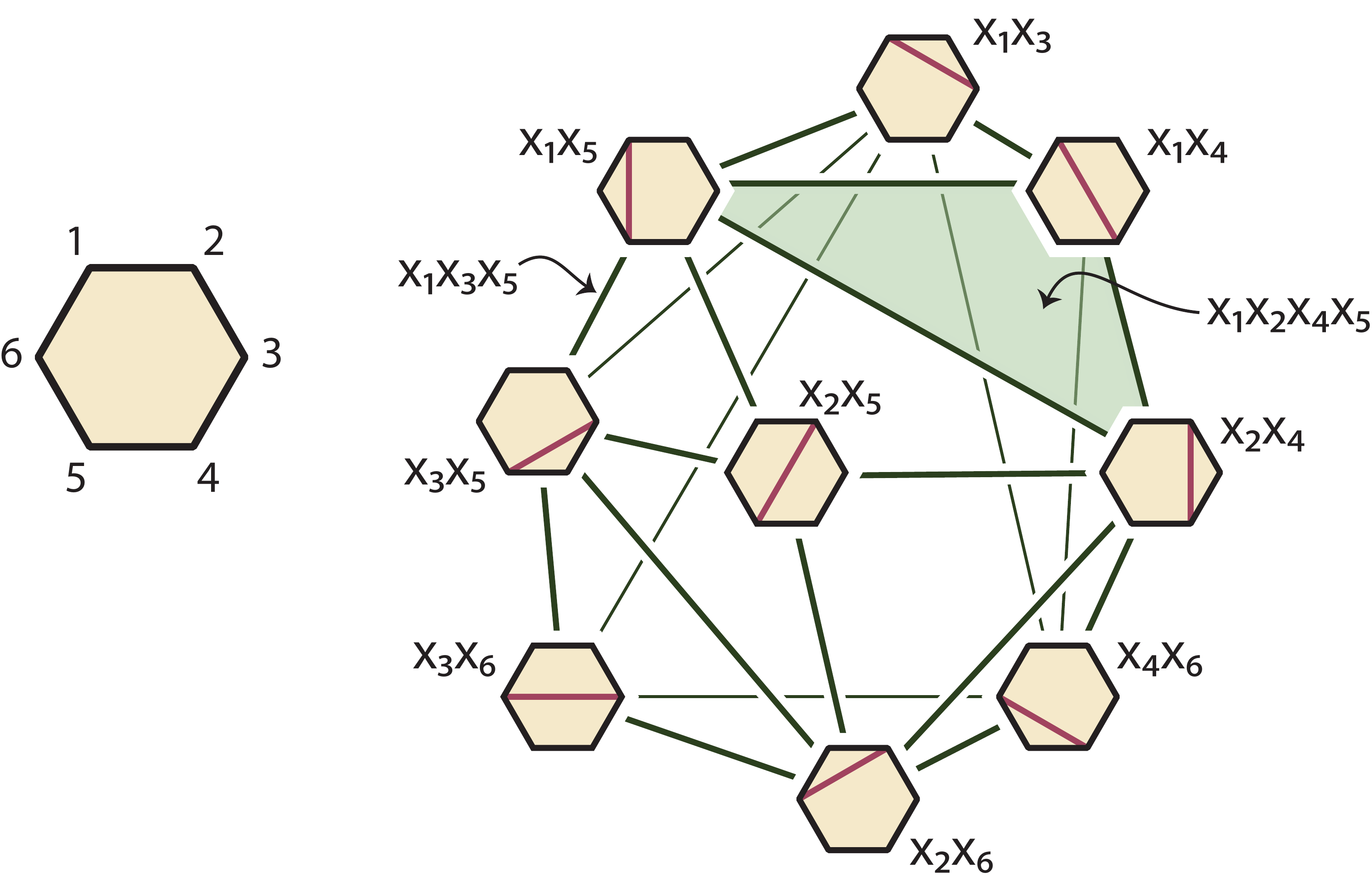}
\end{center}
    \caption{The complex $A_6$ with its monomial labeling (partially indicated).}
\label{Fig:resolution}
\end{figure}



Let us first clarify our terms.  To simplify notation we will associate to any monomial $x_1^{i_1}x_2^{i_2} \cdots x_n^{i_n} \in R$ the vector $(i_1, i_2, \dots, i_n) \in {\mathbb N}^n$ and will freely move between notations. We define a \emph{labeled polyhedral complex} to be a polyhedral complex $\mathcal{X}$ together with an assignment $a_F \in {\mathbb N}^n$ to each face $F \in {\mathcal X}$ such that for all $i = 1, 2, \dots, n$ we have
\[(a_F)_i = \max\{(a_G)_i: G \subset F \}.\]
If ${\mathcal X}$ is a labeled polyhedral complex we can consider the ideal $M = M_{\mathcal X} \subset {\mathbb K}[x_1, x_2, \dots, x_n]$ generated by the monomials corresponding to its vertices (as usual we identify an element $\alpha \in {\mathbb N}^n$ as the exponent vector of a monomial).  The topological space underlying ${\mathcal X}$ (with a chosen orientation) has an associated chain complex ${\mathcal F}_{\mathcal X}$ of $k$-vector spaces that computes cellular homology.  Since ${\mathcal X}$ has monomial labels on each of its cells, we can homogenize the differentials with respect to this basis and in this way ${\mathcal F}_{\mathcal X}$ becomes a complex of free modules over the polynomial ring $R = {\mathbb K}[x_1, x_2, \dots, x_n]$.   We say that the polyhedral complex ${\mathcal X}$ \emph{supports} a resolution of the ideal $M$ if ${\mathcal F}_{\mathcal X}$ is in fact a graded free resolution of $M$.     For more details and examples of cellular resolutions we refer to \cite{MilStu}.

For any $\sigma \in {\mathbb N}^n$ we let ${\mathcal X}_{\leq \sigma}$ denote the subcomplex of ${\mathcal X}$ consisting of faces $F$ for which $a_F \leq \sigma$ componentwise.  We then have the following criteria, also from \cite{MilStu}.  

\begin{lemma} \label{lem:criterion}
Let ${\mathcal X}$ be a labeled polyhedral complex and let $M = M_{\mathcal X} \subset {\mathbb K}[x_1, \dots ,x_n]$ denote the associated monomial ideal generated by the vertices. Then ${\mathcal X}$ supports a cellular resolution of $M$ if and only if the complex ${\mathcal X}_{\leq \sigma}$ is ${\mathbb K}$-acyclic (or empty) for all $\sigma \in {\mathbb N}^n$.  Futhermore, the resolution is minimal if and only if $a_F \neq a_G$ for any pair of faces $F \subsetneq G$.

\end{lemma}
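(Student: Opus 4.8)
The strategy is the standard multigraded argument for cellular resolutions, carried out as follows. The complex $\mathcal{F}_{\mathcal X}$ is naturally $\mathbb{Z}^n$-graded: for a face $F\in{\mathcal X}$ the corresponding free generator $e_F$, placed in homological degree $\dim F+1$, sits in multidegree $a_F$, and the homogenized cellular differential sends $e_F$ to $\sum_{F'}\pm\,x^{a_F-a_{F'}}e_{F'}$ over the facets $F'$ of $F$; here $a_{F'}\le a_F$ coordinatewise because $F'\subset F$, so each $x^{a_F-a_{F'}}$ is an honest monomial. Augmenting by the map $(\mathcal{F}_{\mathcal X})_1\to M$, $e_v\mapsto x^{a_v}$, we get a $\mathbb{Z}^n$-graded complex, and such a complex is exact — i.e.\ a resolution of $M$ — if and only if it is exact in every multidegree $\sigma$. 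Since all modules in sight are supported in $\mathbb{N}^n$, it suffices to analyze the strand in each fixed $\sigma\in\mathbb{N}^n$.

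The heart of the argument is the identification of this strand with a reduced cellular chain complex. As $R(-a_F)_\sigma$ is one-dimensional, spanned by $x^{\sigma-a_F}$, exactly when $a_F\le\sigma$ and is $0$ otherwise, the degree-$\sigma$ part of $\mathcal{F}_{\mathcal X}$ has $\mathbb{K}$-basis $\{e_F : a_F\le\sigma\}$, with $e_F$ in homological degree $\dim F+1$ — and by definition this index set is precisely the set of faces of the subcomplex $\mathcal{X}_{\le\sigma}$. Performing the change of basis $x^{\sigma-a_{F'}}e_{F'}\leftrightarrow F'$ turns each differential coefficient $\pm\,x^{a_F-a_{F'}}$ into $\pm1$, so $(\mathcal{F}_{\mathcal X})_\sigma$ becomes, verbatim, the signed cellular chain complex of $\mathcal{X}_{\le\sigma}$, reindexed so that $k$-cells occupy homological degree $k+1$. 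Moreover $M_\sigma=\mathbb{K}$ exactly when some generator $x^{a_v}$ divides $x^\sigma$, i.e.\ exactly when $\mathcal{X}_{\le\sigma}$ contains a vertex, i.e.\ is nonempty, and in that case the augmentation is the usual augmentation $C_0(\mathcal{X}_{\le\sigma})\to\mathbb{K}$. Hence $0\leftarrow M_\sigma\leftarrow(\mathcal{F}_{\mathcal X})_\sigma$ is, after the reindexing, the augmented reduced cellular chain complex $\widetilde{C}_\bullet(\mathcal{X}_{\le\sigma};\mathbb{K})$.

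The first assertion is now immediate: the degree-$\sigma$ strand is exact if and only if either $\mathcal{X}_{\le\sigma}=\emptyset$ (so $M_\sigma=0$ and every graded piece of $\mathcal{F}_{\mathcal X}$ in degree $\sigma$ vanishes) or $\widetilde{H}_i(\mathcal{X}_{\le\sigma};\mathbb{K})=0$ for all $i$, the case $i=-1$ being automatic once $\mathcal{X}_{\le\sigma}\ne\emptyset$; quantifying over $\sigma\in\mathbb{N}^n$ yields the stated criterion. For the minimality statement, recall that a graded free complex resolving $M$ is the minimal resolution precisely when every entry of every differential lies in $\mathfrak{m}=(x_1,\dots,x_n)$ (Nakayama's lemma). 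The entry attached to a codimension-one pair $F'\subsetneq F$ is $\pm\,x^{a_F-a_{F'}}$, with $a_{F'}\le a_F$, which lies in $\mathfrak{m}$ iff $a_{F'}\ne a_F$; so the resolution is minimal iff $a_{F'}\ne a_F$ for every such codimension-one pair. Finally this is equivalent to $a_F\ne a_G$ for every pair $F\subsetneq G$: along a maximal chain of faces from $F$ to $G$ the labels are nondecreasing, so if the two endpoints carry the same label then so do some two consecutive (hence codimension-one) faces.

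I do not anticipate a genuine obstacle; this is essentially the argument of Bayer--Sturmfels and Miller--Sturmfels specialized to the polyhedral hypothesis assumed here. The only step requiring real care is the bookkeeping in the second paragraph — matching the homogenized cellular differential with the ordinary cellular boundary map after the scalar change of basis, getting the homological-degree shift right, and handling the case $\mathcal{X}_{\le\sigma}=\emptyset$ consistently on the algebraic side (vanishing of $M_\sigma$ and of the strand) and on the topological side (the void complex contributes no reduced homology).
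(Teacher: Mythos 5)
The paper does not prove this lemma at all --- it is quoted directly from Miller--Sturmfels \cite{MilStu} as a known criterion --- so there is no internal proof to compare against. Your argument is the standard one from that source (decompose the $\mathbb{Z}^n$-graded complex into its degree-$\sigma$ strands, identify each strand with the augmented reduced cellular chain complex of ${\mathcal X}_{\leq \sigma}$ after rescaling basis elements, and read off minimality from whether any differential entry is a unit), and it is correct as written; in particular you correctly use the label condition $(a_F)_i = \max\{(a_G)_i : G \subset F\}$ to get monotonicity of labels along chains, which is what upgrades the codimension-one minimality criterion to the statement about arbitrary pairs $F \subsetneq G$.
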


With this criteria in place we can establish the following.

\begin{theorem}\label{thm:resolution}
For each $n \geq 4$ the associahedron $A_n$, with the monomial labeling described above, supports a cellular resolution of the edge ideal $J_n$.
\end{theorem}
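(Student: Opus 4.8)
The plan is to verify the criterion of Lemma~\ref{lem:criterion}: it suffices to show that for every $\sigma \in {\mathbb N}^n$ the subcomplex $(A_n)_{\leq \sigma}$ is ${\mathbb K}$-acyclic or empty. Since all generators of $J_n$ are squarefree of degree $2$, and the label of a face is the lcm of its vertex labels, we may restrict attention to squarefree $\sigma$, i.e. to subsets $W \subseteq [n]$; for a general $\sigma$ the complex $(A_n)_{\leq \sigma}$ equals $(A_n)_{\leq \mathrm{supp}(\sigma)}$ (a face with a squared variable in its label cannot occur since no vertex label has one, and an lcm of squarefree monomials is squarefree). So the task reduces to: for each $W \subseteq [n]$, the subcomplex of $A_n$ consisting of those non-crossing collections of diagonals all of whose endpoints lie in $W$ is either empty or acyclic.

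Next I would identify this subcomplex combinatorially. A diagonal $\{i,j\}$ of the $n$-gon has label $x_ix_j$, so it survives in $(A_n)_{\leq W}$ exactly when $i,j \in W$; a face (non-crossing set of diagonals) survives exactly when every diagonal in it has both endpoints in $W$. Writing $W = \{w_1 < w_2 < \dots < w_k\}$, the diagonals with both endpoints in $W$ that are genuine diagonals of the $n$-gon are precisely the ``$W$-chords'' that are not edges of the $n$-gon; the surviving subcomplex is therefore isomorphic to the order complex of non-crossing sets of such chords. The key point is that this is essentially the (boundary complex of the) \emph{associahedron on the $k$-gon with vertex set $W$}, except that some chords of the $k$-gon are missing --- namely those pairs $\{w_a,w_{a+1}\}$ that happen to already be edges of the ambient $n$-gon (consecutive in $[n]$), and the ``long edge'' $\{w_1,w_k\}$ if $w_1,w_k$ are adjacent around the $n$-gon. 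In other words, $(A_n)_{\leq W}$ is the subcomplex of the associahedron $A_{|W|}$ obtained by deleting the vertices corresponding to a certain set of ``boundary-ish'' diagonals.

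The main obstacle is thus to show this vertex-deleted associahedron is acyclic (when nonempty). I would handle it by showing it is a cone, or collapsible, via an explicit argument on the polygon $W$. The cleanest route: pick a vertex $w$ of the polygon on $W$ that is an endpoint of at least one surviving diagonal (one exists whenever the complex is nonempty, i.e. whenever $|W| \geq 4$ and $W$ is not just a set of consecutive vertices of the $n$-gon). Then find a single diagonal $\delta$ incident to $w$ that (i) is a surviving vertex of $(A_n)_{\leq W}$ and (ii) is non-crossing with every other surviving diagonal incident to that same ``ear,'' so that $(A_n)_{\leq W}$ is a cone with apex $\delta$ --- concretely, an ``ear'' diagonal cutting off a triangle at a suitable vertex of $W$ always works, because any non-crossing family can be enlarged by such an ear. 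A cone is contractible, hence ${\mathbb K}$-acyclic, and this completes the verification. (If instead the coning vertex is not always available one falls back on a shelling/collapsibility argument for the relevant subcomplex of $A_n$, but I expect the cone argument to suffice.) Finally, one notes the edge case: when $W$ supports no diagonal at all, $(A_n)_{\leq W}$ is empty, which is permitted. Applying Lemma~\ref{lem:criterion} then yields that $A_n$ supports a cellular resolution of $J_n$, and since $\dim A_n = n-3 = \operatorname{pd}(R/J_n) - 1 = \operatorname{pd}(J_n)$, the dimensions are consistent; minimality is not claimed (and indeed fails for $n \geq 6$).
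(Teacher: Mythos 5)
Your reduction to squarefree $\sigma$, the identification of $(A_n)_{\leq W}$ with the complex of non-crossing chords of the convex hull of $W$ that are genuine diagonals of the $n$-gon, and the overall cone strategy all match the paper's proof. But the apex you propose does not work, and the justification you give for it rests on a false principle. An ear $\{w_{a-1},w_{a+1}\}$ of the $W$-polygon crosses precisely the $W$-chords through $w_a$, so it is a valid cone apex only if no surviving diagonal passes through $w_a$. The claim that ``any non-crossing family can be enlarged by such an ear'' is true only if the ear is allowed to depend on the family; for a cone you need one fixed $\delta$ compatible with \emph{every} face simultaneously. There are sets $W$ for which no choice of cut-off vertex works: for $W=\{1,3,5,7\}$ in the $8$-gon the diagonals $\{1,5\}$ and $\{3,7\}$ between them pass through all four elements of $W$, so every ear crosses some surviving diagonal. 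The correct apex --- the one the paper uses --- is an \emph{edge} of the $W$-polygon rather than an ear: after a cyclic relabeling assume $1\in W$ and $n\notin W$, and let $j$ be the largest element of $W$ with $j>2$ (if none exists the complex is empty). Then $\{1,j\}$ is a diagonal of $C_n$ since $2<j<n$, and it crosses no chord with both endpoints in $W$, because such a crossing would force an endpoint into $\{j+1,\dots,n\}$, which is disjoint from $W$. Hence $(A_n)_{\leq W}$ is a cone with apex $\{1,j\}$.

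A second, smaller omission: your description of $(A_n)_{\leq W}$ as the complex of non-crossing families gives, for $W=[n]$, the boundary sphere of the associahedron, which is \emph{not} acyclic. The acyclicity in that case comes from the interior cell of the polytope $A_n$ (whose label divides $x_1\cdots x_n$ and which therefore belongs to $(A_n)_{\leq [n]}$); this case must be treated separately, as the paper does, by noting that $(A_n)_{\leq [n]}=A_n$ is a convex polytope and hence contractible. The concluding remark about dimensions is fine but not needed for the argument.
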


\begin{proof}
Let $A_n$ denote the $n$-dimensional simplicial associahedron with this monomial labeling.  By construction, the vertices of $A_n$ correspond to the generators of $J_n$.   To show that $A_n$ supports a resolution of $J_n$, according to Lemma \ref{lem:criterion} it is enough to show that for any $\sigma \in {\mathbb N}^n$ we have that the subcomplex $(A_n)_{\leq \sigma}$ is ${\mathbb K}$-acyclic. 

Let $\sigma \in {\mathbb N}^n$ and let $(A_n)_{\leq \sigma}$ denote the subcomplex of $A_n$ consisting of all faces with a monomial labeling that divides $\sigma$ (as usual, thinking of $\sigma$ as the exponent vector of the monomial $x_1^{\sigma_1}x_2^{\sigma_2} \cdots x_n^{\sigma_n}$).  In particular, a face $F \in A_n$ is an element of $(A_n)_{\leq \sigma}$ if and only if for every diagonal $x_i x_j \in F$ we have $\sigma_i > 0$ and $\sigma_j >0$.  We claim that $(A_n)_{\leq \sigma}$ is contractible (and hence ${\mathbb K}$-acyclic).

Note that since $J_n$ is squarefree we may assume $\sigma$ has 0/1 entries, and hence we can identify $\sigma$ with a subset of $[n]$.  Also, if $\sigma_i = 1$ for all $i$ (so that $\sigma = [n]$) then we have $(A_n)_{\leq \sigma} = A_n$, which is a convex polytope and hence contractible.  If $\sigma$ has fewer than 2 nonzero entries then $(A_n)_{\leq \sigma}$ is empty.  Without loss of generality, we may then assume that $\sigma_1 = 1$ and $\sigma_n = 0$.  Let $j$ be the largest integer such that $j > 2$ and $\sigma_j = 1$.

Now, since $j < n$ and $\{1,j\} \subset \sigma$ we see that the diagonal $(1,j)$ is a vertex of the simplicial complex $(A_n)_{\leq \sigma}$.  In fact, $(1,j)$ is an element of every facet of $(A_n)_{\leq \sigma}$ since no other diagonal picked up by the elements of $\sigma$ intersects $(1,j)$.  We conclude that $(A_n)_{\leq \sigma}$ is a cone and hence contractible.

\end{proof}

For $n = 5$ one can check that this resolution is in fact \emph{minimal}, but for $n \geq 6$ this is no longer the case.  In particular for $n \geq 6$ we have faces $F \subsetneq G$ in $A_n$ with the same monomial label.



\section{Standard Young Tableaux}\label{sec:tableaux}

It turns out that the number of $i$-dimensional faces of the associahedron $A_n$ (the entries of the face vector of $A_n$) are given by the number of standard Young tableau (SYT) of certain shapes.  Recall that if $\lambda = (\lambda_1, \lambda_2, \dots, \lambda_k)$ is a partition of $n$, a \emph{standard Young tableaux} of shape $\lambda$ is a filling of the Young diagram of $\lambda$ with distinct entries $\{1,2,\dots, n\}$ such that rows and columns are increasing (see Example \ref{ex:tableaux}).

For $0 \leq d \leq n-3$, we let $f(n,d)$ denote the number of ways to choose $d$ diagonals in a convex $n$-gon such that no two diagonals intersect in their interior.  We see that $f(n,d)$ is precisely the number of $(d-1)$-dimensional faces of the polytope $A_n$.  A result attributed to Cayley (according to \cite{Sta}) asserts that 
\begin{equation}\label{eq:faces}
f(n,d) = \frac{1}{n+d}{{n+d}\choose{d+1}} {{n-3}\choose{d}}.
\end{equation} 

Using the hook length formula one can see that this number is also the number of standard Young tableaux of shape
\[(d+1, d+1, 1^{n-d-3}),\]
where as usual
\[1^{n-d-3} = \underbrace{1,\dots, 1}_{(n-d-3)\textrm{-times}}\]
denotes a sequence of $n-d-3$ entries with value 1.  This fact was apparently first observed by O'Hara and Zelevinsky (unpublished), and a simple bijection was given by Stanley \cite{Sta}.

\begin{example}
If we take $d = n-3$ we obtain
\[f(n,n-3) = \frac{1}{2n-3}{{2n-3}\choose{n-2}}{{n-3}\choose{n-3}} = \frac{1}{(n-2)+1}{{2(n-2)} \choose {n-2}},\]
the $(n-2)$nd Catalan number.
\end{example}

\begin{example}\label{ex:tableaux}
If $n = 5$ and $d=1$ the $f(5,1) = \frac{1}{6} {6 \choose 2} {2 \choose 1} = 5$ standard Young tableaux of shape $\lambda = (2,2,1)$ are given by
\medskip
\[\young(12,34,5) \hspace{.3 in} \young(12,35,4)  \hspace{.3 in} \young(13,24,5)  \hspace{.3 in} \young(13,25,4)  \hspace{.3 in} \young(14,25,3) \]
\medskip
These correspond to the 5 diagonals of a 5-gon.
\end{example}


It turns out the Betti numbers of the rings $R/J_n$ are also counted by the number of standard Young tableaux of certain related (sub)shapes.   To establish this result we will employ Hochster's formula (Theorem \ref{Hochster} from above).   Recall that the ring $R/J_n$ can be recovered as the Stanley-Reisner ring of the $n$-cycle, thought of as a 1-dimensional simplicial complex.  Note that when $|W| < n$ the only nonzero contribution to Equation (\ref{eq:Hochster}) comes from 0-dimensional reduced homology, i.e. the number of connected components of the induced complex on $W$ (minus one).

For $n \geq 4$, let $\beta^n_{d,j}$ denote the Betti numbers of the ring $R/J_n$.  Equation (\ref{eq:Hochster}) implies that for $d \geq 1$ we have $\beta^n_{d,j} = 0$ unless $d=n-2$ and $j = n$, or $1 \leq d < n-2$ and $j = d+1$.  Another application of Equation (\ref{eq:Hochster}) gives $\beta^n_{1,2} = {n \choose 2} - n$ and $\beta^n_{n-2,n} = 1$.  For the remaining cases we have the following result.

\begin{theorem} \label{thm:Betti}
For all $n \geq 5$ and $1 \leq d \leq n-3$ the Betti numbers of $R/J_n$ are given by
\begin{equation} \label{eq:betti}
\beta^n_d = \beta^n_{d,d+1} =  \textrm{the number of standard Young tableau of shape $(d+1 ,2, 1^{n-d-3})$}.
\end{equation}
\end{theorem}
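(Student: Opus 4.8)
The strategy is to evaluate the Betti numbers through Hochster's formula together with an explicit count of subsets of the cycle, and then to recognize the resulting expression as a hook length count for $(d+1,2,1^{n-d-3})$.

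First I would apply Theorem \ref{Hochster} to $\Delta = C_n$. Since $1 \le d \le n-3$ we have $2 \le j := d+1 \le n-2$, so for every $W \in \binom{[n]}{d+1}$ the induced subcomplex $C_n[W]$ is a proper subgraph of a cycle and hence a disjoint union of (possibly trivial) paths. Paths are contractible, so $\tilde{H}_i(C_n[W];{\mathbb K}) = 0$ for $i \neq 0$, while $\dim_{\mathbb K}\tilde{H}_0(C_n[W];{\mathbb K}) = c(W) - 1$, where $c(W)$ denotes the number of connected components (maximal cyclic arcs) of $C_n[W]$. Thus Equation (\ref{eq:Hochster}) forces $\beta^n_{d,j} = 0$ for $j \neq d+1$ (the value $j=n$ would require $d=n-2$) and gives
\[ \beta^n_{d,d+1} \;=\; \sum_{W \in \binom{[n]}{d+1}} \bigl( c(W) - 1 \bigr). \]

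Next I would group the subsets $W$ by $c := c(W)$. A standard bead count shows that the number of $k$-subsets of ${\mathbb Z}/n{\mathbb Z}$ forming exactly $c$ cyclic runs equals $\tfrac{n}{c}\binom{k-1}{c-1}\binom{n-k-1}{c-1}$ (choose a composition of $k$ into $c$ occupied runs, a composition of $n-k$ into $c$ gaps, and divide out the rotational placement). Taking $k=d+1$ yields
\[ \beta^n_{d,d+1} \;=\; \sum_{c \ge 1} (c-1)\,\frac{n}{c}\binom{d}{c-1}\binom{n-d-2}{c-1}. \]
Setting $m = c-1$ and using $\tfrac{m}{m+1} = 1 - \tfrac{1}{m+1}$, the first piece is summed by Vandermonde's identity $\sum_m \binom{d}{m}\binom{n-d-2}{m} = \binom{n-2}{d}$, and the second by $\tfrac{1}{m+1}\binom{d}{m} = \tfrac{1}{d+1}\binom{d+1}{m+1}$ followed again by Vandermonde, $\sum_{\ell}\binom{d+1}{\ell}\binom{n-d-2}{\ell-1} = \binom{n-1}{d}$. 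This gives
\[ \beta^n_{d,d+1} \;=\; n\binom{n-2}{d} - \frac{n}{d+1}\binom{n-1}{d} \;=\; \frac{n(n-d-2)}{d+1}\binom{n-2}{d-1}, \]
where the last equality uses Pascal's rule and $d\binom{n-2}{d} = (n-1-d)\binom{n-2}{d-1}$. Finally, applying the hook length formula to $\lambda = (d+1,2,1^{n-d-3})$ — whose hooks are $n-1,\ d+1,\ d-1,\dots,1$ in the first row; $n-d-1$ and $1$ in the second row; and $n-d-3,\dots,1$ in the rest of the first column — gives
\[ f^\lambda \;=\; \frac{n!}{(n-1)(d+1)(d-1)!\,(n-d-1)(n-d-3)!}, \]
and a routine simplification reduces this to $\frac{n(n-d-2)}{d+1}\binom{n-2}{d-1}$, matching the previous display. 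Together with the already-recorded values $\beta^n_{1,2} = \binom{n}{2}-n$ and $\beta^n_{n-2,n} = 1$, this completes the proof.

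The substance here lies entirely in the two binomial manipulations of the middle step — the circular-run count and the Vandermonde bookkeeping — which are routine but need care at the boundary cases $d=1$ and $d=n-3$ and in the vanishing of higher homology of $C_n[W]$. The genuinely interesting gap is that this argument is not bijective: in the spirit of Stanley's bijection for the $f$-vector of $A_n$, one would want a direct combinatorial bijection realizing $\beta^n_d$ as a count of the relevant tableaux (rather than passing through closed forms), ideally compatible with the face structure of $A_n$ as discussed in the introduction. I expect producing such a bijection to be the main challenge, though it is not needed for the theorem as stated.
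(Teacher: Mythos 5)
Your proof is correct, and it takes a genuinely different route from the paper. The paper never computes a closed form inside the proof: it shows that both sides of Equation (\ref{eq:betti}) satisfy the same recursion $F(n,d)=F(n-1,d-1)+F(n-1,d)+\binom{n-2}{d}$ --- the Betti side by splitting the Hochster sum according to whether $n\in W$, the tableau side by cases on where the entry $n$ sits --- and then checks initial conditions at $n=5$, $d=1$, and $d=n-3$. You instead evaluate both sides explicitly: Hochster's formula reduces $\beta^n_{d,d+1}$ to $\sum_W(c(W)-1)$, the cyclic-run count $\frac{n}{c}\binom{k-1}{c-1}\binom{n-k-1}{c-1}$ plus Vandermonde gives the closed form $\frac{n(n-d-2)}{d+1}\binom{n-2}{d-1}$, and the hook length formula gives the same value for $f^{(d+1,2,1^{n-d-3})}$; I checked the hook set, the two Vandermonde evaluations, and the final Pascal-rule simplification, and all are right (e.g.\ both sides give $16$ at $n=6$, $d=2$). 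Your approach has the advantage of producing the explicit formula of Remark \ref{rem:Bettiexplicit} as a byproduct rather than as a separate afterthought, at the cost of relying on the (standard but unproved in your writeup) bead-count lemma; the paper's recursion keeps the two sides of the identity structurally parallel without ever writing a closed form, which is closer in spirit to the bijective proof you correctly identify as the remaining desideratum. Neither argument is bijective.
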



\begin{proof}
We will establish the equality in Equation (\ref{eq:betti}) by showing that for $n \geq 6$ and $1 < d < n-4$ both sides of the equation satisfy the recursion 
\begin{equation} \label{eq:recursion}
F(n,d) = F(n-1,d-1) + F(n-1,d) + {{n-2} \choose {d+1}}.
\end{equation}
For the Betti numbers (the left hand side), we use Hochster's formula.  For each $d$, the computation of $\beta^n_{d,d+1}$ via Equation (\ref{eq:Hochster}) involves subcomplexes given by subsets $W$ of $[n]$ of size $d+1$.  First suppose we have chosen $W \subset [n]$ with $n \in W$.  Then we recover the contribution to Equation (\ref{eq:Hochster}) from the homology of induced subsets of size $d$ in the cycle on the vertices $[n-1]$, namely $\beta^{n-1}_{d-1, d}$.  However, if 1 and $n-1$ are both \emph{not} in $W$ then we get an additional contribution given by the isolated point $n$.  There are ${{n-3} \choose {d}}$ such instances.

Next suppose $n \notin W$.  Then again we recover contribution from the homology of induced subsets of size $d+1$ in the cycle $[n-1]$; this quantity is given by $\beta^{n-1}_{d,d+1}$.  In this case we have an additional contribution coming from the subsets $W$ including both 1 and $n-1$, since as subsets of the $n$-cycle these will be disconnected.  There are ${{n-3} \choose {d-1}}$ of these.  Putting this together, we have 
\[\beta^n_{d,d+1} = \beta^{n-1}_{d-1,d} + \beta^{n-1}_{d,d+1} + {{n-3} \choose d} + {{n-3} \choose {d-1}} = \beta^{n-1}_{d-1,d} + \beta^{n-1}_{d,d+1} + {{n-2} \choose {d}},\]
recovering Equation (\ref{eq:recursion}).

We next consider the right hand side of Equation (\ref{eq:betti}), namely the number of standard Young tableaux of shape $(d+1 ,2, 1^{n-d-3})$.  Recall that the fillings involve picking entries one each from the set $\{1,2,\dots,n\}$.  If $n$ is an entry in the first row (necessarily in the last column) then we recover all such fillings from standard Young tableaux of shape
\[(d, 2, 1^{n-d-3}) = ((d-1)+1, 2, 1^{(n-1)-(d-1)-3}).\]
 If $n$ is the (only) entry in the last row, then we recover all such fillings from standard Young tableaux of shape $(d+1,2,1^{n-d-4}) = (d+1,2,1^{(n-1)-d-3})$.  With these counts we miss the standard tableaux with $n$ as the entry in the second row (necessarily in the second column).  In this case we must have 1 as the entry in the first row, first column, but are free to choose any increasing sequence of length $d$ to fill the remaining entries of the first row (with the rest of the entries determined).  There are ${{n-2} \choose {d}}$ such choices.  Adding these three counts gives us the desired recursion from Equation (\ref{eq:recursion}).

We next check the initial conditions.  For $n=5$ Hochster's formula again gives us $\beta^5_{1,2} = \beta^5_{2,3} = 5$.  One can check (see Example \ref{ex:tableaux}) that there are precisely 5 standard Young tableau of shape $(2,2,1)$ and of its conjugate shape $(3,2)$.

For arbitrary $n$ and $d = 1$ we have $\beta^n_{1,2} = {n \choose 2} - n$, given by the number of generators of $J_n$.  On the other hand, in a standard Young tableau of shape $(2,2,1^{n-4})$ we can have any pair $(i,j)$ with $i<j$ occupy the second row except $(1,2), (1,3), \dots, (1,n)$, or $(2,3)$.  Hence the number of such fillings is also given by ${n \choose 2} - n$.

Similarly, for arbitrary $n$ and $d=n-3$, Hochster's formula implies that the Betti numbers $\beta^n_{n-3, n-2}$ are given by all choices of $n-2$ vertices of the $n$-gon corresponding to complements of diagonals (since these remaining pair of vertices with be disconnected).  Hence again $\beta^n_{n-3,n-2} = {n \choose 2} - n$ (this also follows from the fact that the ring $R/J_n$ is Gorenstein and therefore has a palindromic sequence of Betti numbers).  In terms of tableaux, we see that the shape $(n-2,2)$ is conjugate to $(2,2,1^{n-4})$ and hence both shapes have the same number of fillings.   
\end{proof}

\begin{remark}\label{rem:Bettiexplicit}
An application of the hook length formula gives an explicit value for the Betti numbers of $R/J_n$:
\begin{equation}
\beta^n_{d,d+1} = {n \choose {d+1}} \frac{d(n-d-2)}{n-1}
\end{equation}
\end{remark}

After a version this paper was posted on the ArXiv it was pointed out to the author that this formula had previously been established in \cite{BruHib}, with a combinatorial proof given in \cite{ChoKim}.

\begin{remark}
As we have seen, the rings $R/J_n$ are Gorenstein and hence the Betti numbers of $R/J_n$ are palindromic in the sense that 
\[ \beta^n_{d} = \beta^n_{n-d-2}. \]
\noindent
The realization of the Betti numbers of $R/J_n$  in terms of standard Young tableaux (Theorem \ref{thm:Betti}) provides a nice combinatorial interpretation of this property.  The partition $(d+1,2,1^{n-d-3})$ is conjugate to the partition $((n-d-2)+1, 2, 1^{n-(n-d-2)-3}) = (n-d-1, 2, 1^{d-1})$ and hence they have the same number of fillings. 

\end{remark}

\begin{example}
For $n =6$ the resolution of $R/J_6$ can be represented as
\[ \begin{array}{ccccccccccccc}
0 & \leftarrow & R & \leftarrow & \yng(2,2,1,1) & \leftarrow & \yng(3,2,1) & \leftarrow & \yng(4,2) & \leftarrow & R & \leftarrow & 0. \end{array}
\]
%
\noindent
In each homological degree we have a basis for the free module given by all standard Young tableaux of the indicated shape. Note that $(2,2,1,1)$ is conjugate to $(4,2)$.
\end{example}










\section{Discrete Morse theory and matchings}\label{sec:matching}

As we have seen, the associahedron $A_n$ (with the monomial labeling described above) supports a resolution of the ideal $J_n$.  We have also seen that the resolution is not minimal, and in particular the labeling of $A_n$ produces distinct faces $F \subsetneq G$ with the same monomial labeling.  In fact as $n$ increases the resolution becomes further and further from minimal in the sense that the number of facets of $A_n$ (a Catalan number, on the order of $\frac{4^n}{n^{3/2}}$) dominates the dimension of the second highest syzygy module of $R/J_n$ (which is on the order of $n^2$).  

\begin{example}
Face numbers versus Betti numbers for $n = 6,7,8,9$ are indicated below.  Here $f(n,j)$ refers to the number of $j$-dimensional faces in the Associahedron $A_n$.
\medskip

\hspace{1 in}
\begin{tabular}{|c|ccccc|}
  \hline
   & d=0 & 1 & 2 & 3 & 4 \\
  \hline
  $\beta^6_d$ & 1 & 9 & 16 & 9 & 1 \\
  $f(6, d-1)$ & 1 & 9 & 21 & 14 & 1 \\
  \hline
\end{tabular}

\medskip

\hspace{1 in}
\begin{tabular}{|c|cccccc|}
  \hline
   & d=0 & 1 & 2 & 3 & 4 & 5 \\
  \hline
  $\beta^7_d$ & 1 & 14 & 35 & 35 & 14 & 1 \\
  $f(7, d-1)$ & 1 & 14 & 56 & 84 & 42 & 1 \\
  \hline
\end{tabular}

\medskip
\hspace{1 in}
\begin{tabular}{|c|ccccccc|}
  \hline
   & d=0 & 1 & 2 & 3 & 4 & 5 & 6 \\
  \hline
  $\beta^8_d$ & 1 & 20 & 64 & 90 & 64 & 20 & 1 \\
  $f(8, d-1)$ & 1 & 20 & 120 & 300 & 330 & 132 &1 \\
  \hline
\end{tabular}

\medskip
\hspace{1 in}
\begin{tabular}{|c|cccccccc|}
  \hline
   & d=0 & 1 & 2 & 3 & 4 & 5 & 6 & 7 \\
  \hline
  $\beta^9_d$ & 1 & 27 & 105 & 189 & 189 & 105 & 27 & 1 \\
  $f(9, d-1)$ & 1 & 27 & 225 & 825 & 1485 & 1287 & 429 & 1  \\
  \hline
\end{tabular}

\end{example}

\subsection{Morse matchings and first syzygies}

Batzies and Welker and others (see \cite{BatWel} and \cite{Sko}) have developed a theory of algebraic Morse theory that allows one to match faces of a labeled complex in order to produce resolutions that become closer to minimal.  In the usual combinatorial description of this theory, one must match elements in the face poset of the labeled complex that have the same monomial labeling.  The matching must also satisfy a certain acyclic condition, described below. We refer to \cite{BatWel} for further details.

A closer analysis of our monomial labeling of $A_n$ reveals certain faces that must be matched away in any minimal resolution, in the sense that the associated monomial has the wrong degree.  In particular, since we know that $R/J_n$ has an `almost' linear resolution (as described above) it must be the case that in any minimal cellular resolution ${\mathcal X}$, each $j$-dimensional face of ${\mathcal X}$ is labeled by a monomial of degree $j+2$ (for $j < n-3$).  Our labeling of $A_n$ has the property that the monomial $m$ associated to a face $F$ is given by the product of the variables involved in the choice of diagonals, and in particular a `properly' labeled $j$-dimensional face corresponds to a subdivision of $C_n$ with $j+1$ diagonals involving precisely $j+2$ vertices.  This motivates the following.

\begin{definition}
Suppose $S$ is a subdivision of the $n$-gon $C_n$, by which we mean a collection of $d$ non-crossing diagonals.  We will say that $S$ is \emph{proper} if the set of endpoints of the diagonals has exactly $d+1$ elements (as vertices of $C_n$).  We will say that $S$ is \emph{superproper} if uses more than $d+1$ vertices and \emph{subproper} if it uses less.
\end{definition}

\begin{figure}[!ht]
\begin{center}
  \includegraphics[scale = .4]{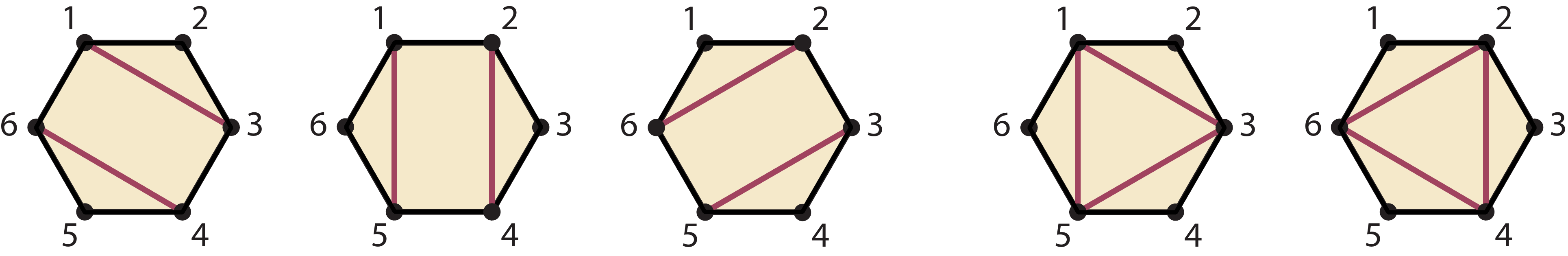}
\end{center}
    \caption{For $n=6$, the three superproper subdivisions with $d=2$ and the two subproper subdivisions with $d=3$.  All other subdivisions of the $6$-gon are proper.}
\label{Fig:improper}
\end{figure}

In fact we can explicitly describe a (partial) Morse matching on the monomial-labeled face poset of $A_n$ that is `perfect' for rank $d=2$.   A superproper subdivision of an $n$-cycle with $d=2$ is simply a pair of disjoint diagonals, say $E = \{ij, k \ell \}$ with $i<j$, $k<\ell$, and $i < k$.  In the face poset of $A_n$ we match this 2-face with the 3-face $F$, where
\[F = \left\{ \begin{array}{ll}
\{ij, k\ell, j \ell\} &  \textrm{if $j < k$} \\
\{ij, k \ell, i \ell\} & \textrm{otherwise.}
\end{array} \right.
\]

A subproper subdivision with $d=3$ is an inscribed triangle, say with diagonals $\{ij, ik, jk\}$, $i<j<k$.  We match this face with the (proper) 1-face $\{ij, jk\}$.  Recall that the Hasse diagram of the face poset of $A_n$ is a graph with vertices given by all faces of $A_n$, and with edges given by all cover relations $X \prec Y$.  It is easy to that our association is a matching of the Hasse diagram of the face poset of $A_n$, and it is clearly `algebraic' in the sense that matched faces have the same monomial labeling. 

As is typical we think of the Hasse diagram as a directed graph with the orientation on a matched edge pointing up (increasing dimension), and with all unmatched edges pointing down.  The collection of faces not involved in the matching are called the `critical cells', they form a subposet of the original poset.  The main theorem of (algebraic) discrete Morse theory \cite{BatWel} says that if we have an acyclic (algebraic) matching on the Hasse diagram of a cellular resolution, then the critical cells form a monomial-labeled CW-complex that also supports a cellular resolution.  In this way one can obtains a resolution that is closer to being minimal.  In our case we have the following result.

\begin{proposition}\label{prop:edges}
For all $n \geq 6$, the matching on the monomial labeled face poset of $A_n$ described above is acyclic.  Furthermore, the number of unmatched (critical) edges is given by $\beta^n_2$.
\end{proposition}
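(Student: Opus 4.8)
The plan is to verify the two assertions separately: first that the matching is acyclic, then that the number of unmatched edges equals $\beta^n_2$. For the counting statement I would proceed as follows. The edges of $A_n$ are pairs $\{e,f\}$ of non-crossing diagonals of the $n$-gon; such an edge is matched (upward) to a $2$-face exactly when $\{e,f\}$ is a superproper subdivision, i.e.\ when $e$ and $f$ share no endpoint. It is also matched (downward, as the lower face of a matched $2$-face) precisely when it arises as $\{ij,jk\}$ for a subproper triangle $\{ij,ik,jk\}$, i.e.\ when $e$ and $f$ share exactly one endpoint \emph{and} the third diagonal $ik$ closing the triangle is actually a diagonal of the $n$-gon (not an edge of $C_n$). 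So the critical (unmatched) edges are exactly the pairs $\{e,f\}$ of non-crossing diagonals sharing a common vertex, say $\{ij,jk\}$ with $i<j<k$, for which $\{i,k\}$ is \emph{not} a diagonal — equivalently $\{i,k\}$ is an edge of the cycle $C_n$, i.e.\ $k=i+1$ or $\{i,k\}=\{1,n\}$. Thus a critical edge is determined by a vertex $j$ and a neighbor-pair $\{i,i+1\}$ (indices mod $n$) with $j\notin\{i,i+1\}$ and such that $ij$ and $(i+1)j$ are both genuine diagonals. Counting these directly: there are $n$ choices of the cycle-edge $\{i,i+1\}$, and for each, $j$ ranges over the $n-2$ remaining vertices; but we must exclude the cases where $ij$ or $(i+1)j$ fails to be a diagonal, which happens only when $j$ is adjacent on the cycle to $i$ or to $i+1$ — i.e.\ $j\in\{i-1,i+2\}$ — giving $n$ choices of $\{i,i+1\}$ times $2$ excluded values of $j$, but each excluded configuration is an \emph{inscribed path of three consecutive cycle-vertices}, double-counted. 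A careful bookkeeping of this inclusion–exclusion should yield $n(n-4)$ (for $n\geq 6$; one checks small cases separately), and then I would simply compare with the closed form $\beta^n_{2,3}={n\choose 3}\frac{2(n-4)}{n-1}$ from Remark~\ref{rem:Bettiexplicit}, or more cleanly re-derive $\beta^n_2$ as the number of SYT of shape $(3,2,1^{n-5})$ via Theorem~\ref{thm:Betti} and match the bijection: a critical edge $\{ij,jk\}$ as above records a distinguished vertex and a cycle-edge, which is exactly the data of the first row (the increasing sequence filling positions beyond column $2$, of length $d=2$, has ${n-2\choose 2}$ choices only in the recursive step) — I would set up the bijection so the critical edges land on the SYT directly.

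For acyclicity I would use the standard criterion: a matching $\mathcal M$ on the Hasse diagram is acyclic if and only if the directed graph (matched edges oriented up, unmatched oriented down) has no directed cycle, and it suffices to exhibit a rank/weight function on faces that strictly decreases along every directed edge \emph{except} the matched up-edges, together with an argument ruling out alternating cycles. Concretely, all our matched pairs live in dimensions $1\leftrightarrow 2$ and $0$–faces are never matched, so any potential directed cycle must alternate between $2$-faces (going down to an edge via an unmatched cover, then up via a matched cover to another $2$-face) within the single rank band $\{1,2\}$. Thus I would reduce to the following finite combinatorial check: there is no sequence $G_1 \succ e_1 \prec G_2 \succ e_2 \prec \cdots \prec G_1$ where each $G_t$ is a $2$-face, each $e_t$ a $1$-face, $e_t\subset G_t$ is an \emph{unmatched} containment, and $e_t\subset G_{t+1}$ is the \emph{matched} containment (so $e_t$ is the "bottom pair" of the superproper $2$-face $G_{t+1}$, or the special $\{ij,jk\}$ of a subproper $G_{t+1}$). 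To rule this out I would introduce a monovariant, e.g.\ to each edge $\{e,f\}$ assign the multiset of its endpoints, and observe that passing from $e_t$ up to $G_{t+1}$ and then down to $e_{t+1}\neq e_t$ strictly shrinks (say) the lexicographically-least endpoint, or strictly increases the total number of vertices used, contradicting returning to $G_1$. The precise monovariant is the crux of this half.

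The main obstacle I expect is pinning down the acyclicity monovariant so that it genuinely decreases on \emph{every} alternating step of both the superproper and subproper types simultaneously — the two matching rules are defined by slightly different recipes (adding a boundary diagonal $j\ell$ or $i\ell$ versus deleting the closing chord $ik$), and one must find a single quantity respected by both. I anticipate that ordering faces by the vector of vertex-labels used (as a subset of $[n]$, compared say by largest element, then second largest, etc.) will work: the matched up-step from a $2$-face to... wait, rather the matched up-steps are the only increasing steps, and they strictly \emph{increase} the vertex set in the superproper case (the $2$-face $\{ij,k\ell\}$ uses $4$ vertices while $\{ij,k\ell,j\ell\}$ still uses $4$ — hmm, equal) — so vertex count alone is too coarse and I will likely need the finer lexicographic comparison on the actual label sets, or a weight like $\sum 2^{v}$ over used vertices $v$, combined with dimension. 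Verifying that this strictly decreases along each unmatched down-step while the matched up-steps cannot compensate around a cycle is the delicate bookkeeping I would carry out in detail; the rest is routine.
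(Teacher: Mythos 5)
There is a genuine gap in both halves of your proposal. In the counting half you mischaracterize which shared-vertex pairs get matched: the matching pairs each inscribed triangle $\{ij,ik,jk\}$ (with $i<j<k$) with \emph{only one} of its three boundary paths, namely $\{ij,jk\}$, the path through the middle vertex $j$. Your ``i.e.'' silently replaces this with the condition that the closing chord $ik$ be a genuine diagonal, which would pair all three boundary paths of an inscribed triangle with the same $2$-face --- not a matching at all. The critical edges are therefore \emph{not} just the paths whose closing chord is a cycle edge; they are all $\tfrac{n(n-3)(n-4)}{2}$ proper paths minus one per inscribed triangle, i.e.
\[
\frac{n(n-3)(n-4)}{2} - \frac{n(n-4)(n-5)}{6} \;=\; \frac{n(n-2)(n-4)}{3} \;=\; {n \choose 3}\frac{2(n-4)}{n-1} \;=\; \beta^n_2,
\]
which is exactly how the paper concludes. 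Your proposed count $n(n-4)$ is wrong: for $n=6$ it gives $12$, whereas $\beta^6_2 = 16$ (the $18$ proper paths minus the $2$ inscribed triangles $\{1,3,5\}$ and $\{2,4,6\}$).

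The acyclicity half is also not yet a proof: you correctly reduce to alternating cycles in the rank band $\{1,2\}$, but then hunt for a numerical monovariant, observe that vertex count fails, and explicitly leave ``the precise monovariant'' unresolved. No monovariant is needed; the paper's argument is purely structural. Every $1$-face of an inscribed triangle is a proper path, and a proper path can be matched upward only to the inscribed triangle on its own three vertices, and only if it is the designated (middle-vertex) path of that triangle. So if a directed walk ascends from a proper path, it reaches that triangle, must descend to a \emph{different} boundary path, and that path is not the designated one of the only triangle it could ascend to --- the walk terminates. If instead the walk ascends from a disjoint pair to a path of length $3$, the other two $1$-faces of that $2$-face are proper paths, and we are back in the previous case. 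Hence every directed walk dies after at most two ascents and no cycle exists.
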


\begin{proof}
We first make the simple observation that if $F$ is any 2-face of $A_n$ corresponding to a subproper subdivision (in other words an inscribed triangle), there for any 1-face $E$ with $E \prec F$ we must have that $E$ is a path of length 2 (a proper subdivision with $d=2$).  Similarly, if $E$ is a path of length 2 and $E \prec F$ is an \emph{upward} oriented edge then it must be the case that $F$ is an inscribed triangle with the same vertex set as $E$.  This implies that there cannot be any cycles in the oriented face poset involving proper subdivisions with $d=2$ (paths of length 2).  

Next suppose $E \prec F$ is an upward oriented edge in the face poset of $A_n$ where $E$ consists of two disjoint diagonals (a superproper subdivision with $d=2$).  Then according to our matching it must be the case that $F$ is a path of length 3.  To form a cycle in the face poset there must be some downward edge from $F$ to $E^\prime$ with $E^\prime \prec F$.  But then according to our matching it must be the case that $E^\prime$ is a path of length 2.  Hence our observation from the previous paragraph implies that no cycles exist.  We conclude that the matching is acyclic.

We next count the unmatched edges. First observe that the number of proper subdivisions of an $n$-gon with 2 diagonals is given by $\frac{n(n-3)(n-4)}{2}$.  To see this, note that the diagonals involved in such a subdivision must form a path of length 3.  Once we designate the middle vertex in this path (of which there are $n$ choices) we have ${n-3 \choose 2}$ choices for the remaining two vertices.  Next we claim that the the number of subproper subdivisions of an $n$-gon with $d=3$ diagonals (necessarily forming an inscribed triangle) is given by $\frac{ n(n-4)(n-5)}{6}$.  To see this, we first count inscribed triangles with ordered vertex set $(v_1, v_2, v_3)$.  We are free to choose the first vertex $v_1$ from among the $n$ nodes of the cycle.  For the second vertex we have two cases.  If we choose $v_2$ from among the two vertices that are distance 2 from $v_1$, we are left with $(n-5)$ choices for $v_3$.  If we choose $v_2$ from among the vertices more than distance 2 from $v_1$ (of which there are $n-5$ choices), we are then left with $n-6$ choices for $v_3$.  In total there are
\[n(2(n-5) + (n-5)(n-6)) = n(n-4)(n-5)\]
inscribed triangles with the ordered vertex set.  Dividing out by $6$ to forget the ordering gives us the desired count.  As described above, we match each of the $d=2$ superproper subdivisions with a $d=3$ proper subdivision, and we match each of the $d=3$ subproper subdivisions with a $d=2$ proper subdivision.  Hence after matching the number of critical edges is given by
\[\frac{n(n-3)(n-4)}{2} - \frac{n(n-4)(n-5)}{6} = {n \choose 3} \frac{2(n-4)}{n-1},\]
which is precisely $\beta^n_2$ (see Remark \ref{rem:Bettiexplicit}).  This completes the proof.
\end{proof}

Hence our simple matching leaves precisely the number of critical 1-cells that we require.  The rank of the first free module in the resulting cellular resolution will be equal to the rank of the first syzygy module of $R/J_n$.  

\begin{example}
For $n=6$ this matching in fact leads to a minimal resolution of $J_6$.  In this case we have three superproper subdivisions with $d=2$ (namely $\{13, 46\}$, $\{15,24\}$,  and $\{26, 35\}$), and two subproper subdivisions with $d=3$ (namely $\{13, 15, 35\}$ and $\{24, 26, 46\}$).  


\end{example}

\begin{figure}[!ht]
\begin{center}
  \includegraphics[scale = .32]{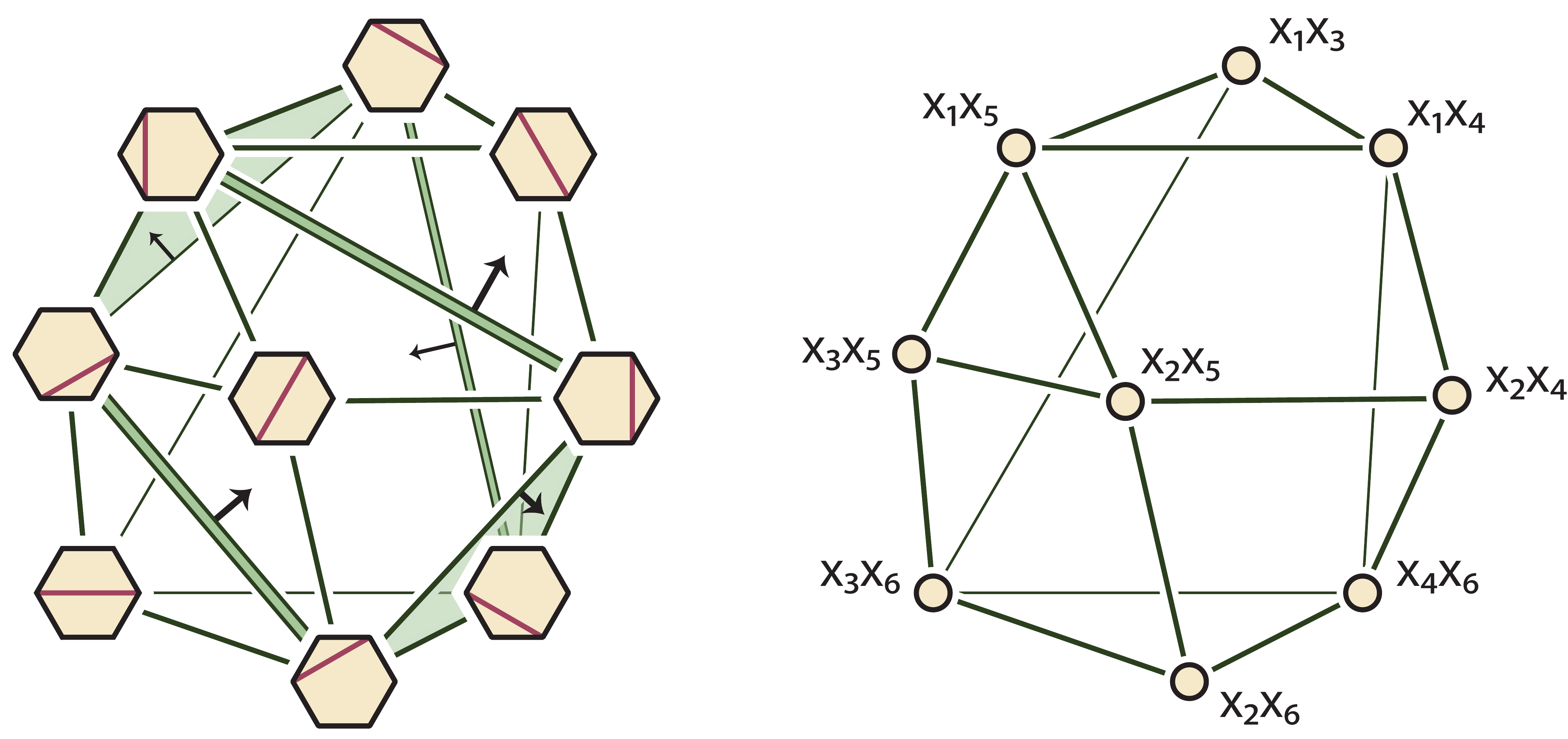}
\end{center}
    \caption{The monomial labeled $A_6$ with five pairs of faces matched (the shaded faces are the improper subdivisions).  The resulting complex on the right supports a minimal resolution of $J_6$.}
\label{Fig:matching}
\end{figure}

We remark that the procedure described above can be extended to the case $n=7$.  We leave the details to the reader but point out that in this case we have:
\begin{itemize}
\item
$14$ superproper subdivisions with $d=2$ (pairs of disjoint diagonals), corresponding to 14 edges of $A_7$ that we match with $2$-faces,
\item
$7$ subproper subdivisions with $d = 3$ (inscribed triangles in a $7$-gon), corresponding to seven $2$-faces that each get matched with an edge,
\item
$14$ superproper subdivisions with $d=3$ (forests consisting of three edges and two components), corresponding to seven $2$-faces that get matched to a $3$-face,
\item
$14$ subproper subdivisions with $d=4$ (inscribed triangles with a pendant edge), corresponding to fourteen $3$-faces that get matched down to a $2$-face.
\end{itemize}

The resulting CW-complex has $56 - 14 - 7 = 35$ edges, $84 - 14 - 7 - 14 - 14 = 35$ two-dimensional faces, and $42 - 28 = 14$ three-dimensional faces, as desired.  Unfortunately we do not know how to extend this matching procedure in general; see the next Section for some comments regarding this.

\subsection{An involution of the associahedron tableaux}

Recall that the faces of the associahedron $A_n$ are counted by standard Young tableaux of certain shapes, while the Betti numbers of $J_n$ are counted by standard Young tableaux  of certain subshapes.  Again motivated by discrete Morse theory this leads to ask whether we can find a matching on the set of associahedron tableaux such that the unmatched elements correspond to the Betti numbers of $J_n$.  This matching should have the property that two matched tableaux differ in cardinality by one.  Let us emphasize that since we do not have a poset structure on these elements we are not at this pointing searching for a {\emph Morse} matching.  Let us first fix some notation.

\begin{definition}
For fixed $n \geq 4$ and $1 \leq d \leq n-3$, we call the collection of standard Young tableaux of shape $(d+1,d+1, 1^{n-d-3})$ the \emph{associahedron tableaux} (denoted by ${\mathcal A_n}$), and the standard Young tableaux of shape $(d+1, 2, 1^{n-d-3})$ the \emph{syzygy tableaux} (denoted by ${\mathcal S_n}$).   Let ${\mathcal A} = \bigcup {\mathcal A}_n$ and ${\mathcal S} = \bigcup{\mathcal S}_n$.

\end{definition}

Note that an element of ${\mathcal A}_n$ has $n+d-1$ boxes, whereas an element of ${\mathcal S}_n$ has $n$ boxes.  If $X \in {\mathcal A}_n$ is an associahedron tableux with largest entries in the second row in the positions $(*,*, n+1, n+2, \dots, n+d-1)$ then it naturally becomes a syzygy tableau by just removing those boxes.  In particular we say that these particular associahedron tableau \emph{restrict} to syzygy tableaux, and in this way we have a natural inclusion ${\mathcal S}_n \subset {\mathcal A}_n$.  

\begin{example}
The associahedron tableau on the left restricts to a syzygy tableau, whereas the associahedron tableau on the right does not. Here $n=7$ and $d=3$. 
\[\young(****,**89,*) \; \rightarrow \; \young(****,**,*) \hspace{.75 in} \young(****,***8,9) \]
\end{example}

We next describe an involution on the set ${\mathcal A}$ such that the fixed elements are precisely the elements that restrict to ${\mathcal S}$.  If $X$ is a standard young tableau we use $|X|$ to denote the number of boxes in the underlying partition.

\begin{proposition}\label{prop:matching}
There exists an involution $\sigma$ on the set ${\mathcal A}$ such that the fixed point set of $\sigma$ is precisely the set of tableaux in ${\mathcal A}$ that restrict to ${\mathcal S}$. Furthermore, if $X \in {\mathcal A}$ such that $\sigma(X) \neq X$, we have $|\sigma(X)| = |X| \pm 1$.
\end{proposition}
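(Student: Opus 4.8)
The box-count assertion $|\sigma(X)|=|X|\pm 1$ will be a free consequence of the construction, so the plan is to build $\sigma$ explicitly and then check that it is an involution with the prescribed fixed-point set. Fix $n$; since a SYT of shape $(d+1,d+1,1^{n-d-3})$ has $n+d-1$ boxes, the shapes of consecutive ``rank'' $d$ differ by a single box, and it suffices to produce an involution of $\mathcal{A}_n$ fixing the restricting tableaux and otherwise matching rank $d$ with rank $d\pm1$. First I would record the direct reformulation of ``restricts'': writing $N=n+d-1$ for the largest entry of a tableau $X$ of rank $d$, let $k=k(X)$ be the largest integer such that $N,N-1,\dots,N-k+1$ occupy, reading leftward, the cells $(2,d+1),(2,d),\dots,(2,d-k+2)$ at the right end of the second row (so $k=0$ exactly when $N$ sits at the foot of column $1$). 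By definition, $X$ restricts to $\mathcal{S}_n$ precisely when $k\ge d-1$.

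Now I would define $\sigma$ as follows. If $X$ restricts, set $\sigma(X)=X$. Otherwise $k\le d-2$, so at least one entry lies off this ``top run''; let $v=N-k$ be the largest such entry. Deleting the $k$ run-cells from the shape of $X$ leaves a Young diagram whose unique largest entry is $v$, so $v$ occupies one of its removable corners; it cannot sit at the new right end of row $2$ (that would extend the run, contradicting maximality of $k$), so $v$ is either in the cell $(1,d+1)$ at the right end of row $1$ or in the cell at the foot of column $1$. In the first case, delete the cells $(1,d+1)$ and $(2,d+1)$, which carry $v$ and $N$, and attach a new cell carrying $v$ to the foot of column $1$: the result has rank $d-1$. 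In the second case, delete the foot of column $1$ and attach new cells $(1,d+2)$ and $(2,d+2)$ carrying $v$ and $N+1$: the result has rank $d+1$. That each output is again a genuine SYT is routine row-/column-strictness bookkeeping, using only that $v$ exceeds every entry off the run, hence every entry of column $1$ below row $2$ and every entry at the right end of row $1$.

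It remains to see that $\sigma^2=\mathrm{id}$ and to identify its fixed points. The two moves are mutually inverse: if $X$ has rank $d$ with $v$ at $(1,d+1)$, then its image $Y$ has rank $d-1$, top-run length $k(Y)=k-1$, and its distinguished entry is again $v$, now lying at the foot of column $1$ of $Y$, so the ``append'' move applied to $Y$ reinserts precisely the two deleted cells and returns $X$ --- and symmetrically in the other direction. Hence $\sigma$ is an involution; each nontrivial move changes the rank and therefore the box count by one, so $\sigma(X)\ne X$ whenever $X$ is non-restricting and $|\sigma(X)|=|X|\pm1$; combined with $\sigma|_{\text{restricting}}=\mathrm{id}$ this identifies the fixed-point set exactly as claimed. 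The boundary ranks deserve a separate glance: for $d=1$ every tableau has $k\ge 0=d-1$ and so is fixed (as it must be, since in rank $1$ every tableau restricts trivially and $\mathcal{A}_n$ agrees with $\mathcal{S}_n$), while for $d=n-3$ there is no column-$1$ tail, which forces $v$ into cell $(1,d+1)$ so that only the rank-lowering move can occur.

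The main obstacle is the case analysis behind ``$v$ lies in exactly one of two admissible cells'' together with the claim that the two moves are \emph{literal} inverses. Concretely: one must show that stripping the $k$ run-cells always leaves a legitimate Young diagram whose top entry is $v$, that $v$ never lands at a third corner, that in the rank-lowering move the two cells to be removed are removable in the required order, and that the top-run length shifts by exactly $\pm1$ under the moves, so that carrying the pair $(k,v)$ through the construction closes up. I expect this bookkeeping --- not the description of $\sigma$ itself --- to contain essentially all of the work.
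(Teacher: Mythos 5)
Your construction is the same involution as in the paper: you identify the same distinguished entry (your $v=N-k$ coincides with the paper's largest element of $\{n+1,\dots,n+d-1\}$ not in the second row), perform the same two corner-swapping moves, and the remaining bookkeeping you flag is exactly the (lightly justified) content of the paper's argument. The proposal is correct and takes essentially the same approach.
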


\begin{proof}
Suppose $X \in {\mathcal A}_n$ is an associahedron tableau.  If $X$ restricts to a syzygy tableau we set $\sigma(X) = X$. Otherwise some element of $\{n+1, n+2, \dots, n+d-1\}$ is not in the second row of $X$; let $i$ be the largest element with this property.  Then $i$ must be the last element of the first row, or else the bottom element in the first column.

In the latter case ($i$ is the bottom most element of first column) we bring that element $i$ to the first row, and add the element $n+d$ to the end of the second row.  This defines $\sigma(X)$.   In the former case ($i$ is the last element of the first row) we obtain $\sigma(X)$ by bringing that element down to the bottom of the first column and deleting the last element of the second row (which must be $n+d-1$).  It is clear that $\sigma(\sigma(X)) = X$.
\end{proof}

\begin{example}\label{ex:matching}
An example of the involution matching an associahedron tableau of shape $(3,3,1,1)$ with one of shape $(4,4,1)$ is given by the following.

\[ \young(***,***,*,8) \; \leftrightarrow \; \young(***8,***9,*)\]
\end{example}

\section{Further questions}

We end with a number of questions that arise from our study.  As we have seen in Section \ref{sec:tableaux}, the number $f(n,d)$ of dissections of an $n$-gon using $d$ (non-crossing) diagonals is well understood, and is given by the number of standard Young tableaux of shape $(d+1,d+1,1^{n-d-3})$.   In the context of enumerating the Betti numbers of the ideal $J_n$ we were interested in subdivisions that involved a fixed number of vertices.  Define $f(n,d,j)$ to be the number of ways to choose $d$ non-crossing diagonals in a convex $n$-gon such that the set of endpoints consists of precisely $j$ vertices of the $n$-gon. 

\begin{question}
 Is there a nice formula for $f(n,d,j)$?  Can it be related to the standard Young tableaux of shape $(d+1, d+1, 1^{n-d-3})$?  
\end{question}

\noindent
We note that if we take $d = n-3$ then varying $j$ gives a refinement of the Catalan numbers which (as far as we know) has not appeared elsewhere. The first few refinements are 
\[14 = 2 + 12, \hspace{.1 in} 42 = 14 + 28, \hspace{.1 in} 132 = 4 + 64 + 64, \dots\]

A related question would be to consider those subdivisions for which the collection of diagonals forms a (connected) tree, since this is likely the more relevant property in the context of syzygies.  For $n \leq 7$ it so happens that the proper subdivisions correspond to those collections of diagonals that form a tree. However, for $n=8$ there exist proper subdivisions that are not trees: for example if $d = 4$ we can take 3 diagonals to form a triangle with vertices $\{1,3,5\}$ along with one disconnected diagonal $\{6,8\}$, in total using 5 vertices of the $8$-gon.

\begin{question}
How many dissections of an $n$-gon with $d$ diagonals have the property that the set of diagonals forms a tree?
\end{question}

In our quest for a Morse matching on the monomial labeled face poset of the associahedron $A_n$ we were unable to employ Stanley's bijection between faces of $A_n$ and standard Young tableaux.   As mentioned above, the difficulty arises as the bijection given in \cite{Sta} is recursively defined and involves certain choices.  However, the fact that the face poset of $A_n$ is labeled by standard Young tableaux suggests that there might be a meaningful poset structure on the set of all standard Young tableaux (or at least the set of Associahedron tableaux).  The hope would be that this poset structure extends the partial order given by the involution on ${\mathcal A}$ described in the proof of Proposition \ref{prop:matching}.  Hence the poset should be graded by the number of boxes in the underlying partition, but will \emph{not} restrict to Young's lattice if one forgets the fillings.  We refer to Example \ref{ex:matching} for a example of a cover relation between two standard Young tableaux such that the underlying partitions are not related in Young's lattice.

\begin{question}
Does there exist a meaningful poset structure on the set of standard Young tableaux, consistent with the conditions described above? 
\end{question}

Finally, we see in Figure \ref{Fig:matching} that a minimal resolution of $J_6$ is supported on a 3-dimensional polytope.  As we mentioned the construction there was a bit ad hoc but it does lead us to following:
\begin{question}
Does the ideal $J_n$ have a minimal cellular resolution supported on a (necessarily $(n-3)$-dimensional) polytope?
\end{question}

Work in this direction (along with some further generalizations) is currently being pursued by Engstr\"om and Linusson \cite{EngLin}.

\subsection*{Acknowledgements}

We thank Ken Baker for his assistance with the figures, and Alex Engstr\"om, Jakob Jonsson, and Michelle Wachs for helpful conversations. Alex and I first realized the potential connection to standard Young tableaux after inputting the Betti numbers of $J_n$ into OEIS \cite{OEIS} some years ago.  Thanks also to the anonymous referee for a careful reading.

\end{document}